\documentclass[a4paper]{article}

\usepackage[utf8]{inputenc}
\usepackage[english]{babel}
\usepackage{color}
\usepackage{amsmath,amssymb,amsthm}
\usepackage{graphicx,subfigure}
\usepackage[misc,geometry]{ifsym}

\newcommand{\F}{\mathbb{F}}

\newcommand{\R}{\mathbb{R}}
\newcommand{\Z}{\mathbb{Z}}
\newcommand{\PG}{\text{PG}}
\newcommand{\abs}[1]{\left\vert #1\right\vert}

\theoremstyle{definition}

\newtheorem{theorem}{Theorem}
\newtheorem{definition}[theorem]{Definition}
\newtheorem{example}[theorem]{Example}
\newtheorem{lemma}[theorem]{Lemma}

\begin{document}

\title{Power sum polynomials in a discrete tomography perspective}


\author{Silvia M.C.~Pagani, Silvia Pianta}

\date{\small Università Cattolica del Sacro Cuore\\ via Musei 41, 25121 Brescia, Italy}



\maketitle

\begin{abstract}
For a point of the projective space $\PG(n,q)$, its R\'edei factor is the linear polynomial in $n+1$ variables, whose coefficients are the point coordinates. The power sum polynomial of a subset $S$ of $\PG(n,q)$ is the sum of the $(q-1)$-th powers of the R\'edei factors of the points of $S$. The fact that many subsets may share the same power sum polynomial offers a natural connection to discrete tomography. In this paper we deal with the two-dimensional case and show that the notion of ghost, whose employment enables to find all solutions of the tomographic problem, can be rephrased in the finite geometry context, where subsets with null power sum polynomial are called ghosts as well. In the latter case, one can add ghosts still preserving the power sum polynomial by means of the multiset sum (modulo the field characteristic). We prove some general results on ghosts in $\PG(2,q)$ and compute their number in case $q$ is a prime.

\textbf{Keywords:} discrete tomography; ghost; multiset sum; power sum polynomial; projective plane.
\end{abstract}

\section{Introduction}
The aim of the present paper is to investigate the structure and the size of those point sets of $\PG(2,q)$ associated to a given homogeneous polynomial of degree $q-1$, introduced by P.~Sziklai in \cite{sziklai} and named \emph{power sum polynomial}. It is a hard task in general to determine all sets sharing the same power sum polynomial. The ill-posedness of the aforementioned problem enables to construct a link to discrete tomography. There, linear algebra offers a straightforward way of dealing with all images agreeing with the same set of projections (see \cite{hati}). The key role is played by \emph{ghosts}, which constitute the kernel of the linear system in which the problem may be translated.

Motivated by the description of the set of solutions of a tomographic problem, we investigate the recovery of the subsets of $\PG(2,q)$ related to the same power sum polynomial in the following way. We look for an operation on the subsets of $\PG(2,q)$ such that those with associated null polynomial, which we call ghosts as well, constitute the kernel of a suitable function mapping a subset to the corresponding power sum polynomial. A good choice for the operation turns out to be the multiset sum modulo $p$ (the field characteristic). Then, we investigate the algebraic and geometric properties of the set of ghosts and find out the size of the subset of ghosts when $q=p$, namely, for prime fields.

We remark that the ``polynomial technique'' for subsets of a projective space is a well investigated research field in the framework of finite geometries. It consists in the study of the interplay between subsets of a projective space and polynomials over finite fields (see for instance \cite{blokhuis,handbook_b}). It is essentially listed in  three steps: rephrasing the theorem to be proven into a relationship on points, reformulating the (new) theorem in terms of polynomials over finite fields, and perform the calculations. This technique was introduced in the 70’s by L.~R\'edei \cite{redei} and inspired a series of results on blocking sets, directions and codes from the 90’s onward, such as \cite{debeule,gsw,szzs}. The advantage of choosing the power sum polynomial, w.r.t.~the more studied R\'edei polynomial, is the fact that the first one has a lower degree when the subset has more than $q-1$ points.

The paper is organized as follows. Section \ref{sec:def} establishes the framework and gives the main definitions. In Section \ref{sec:tomography} the tomographic problem is recalled, together with a focus on the similarities with the treated problem. Section \ref{sec:generalresults} gives some general results for ghosts in the projective plane, while in Section \ref{sec:p} we set $q=p$ and show the size of the set of ghosts in that case. Section \ref{sec:conclusions} provides possible further work and concludes the paper.

\section{The power sum polynomial}\label{sec:def}
Let $\F_q$ denote the Galois field of order $q=p^h$, $p$ prime, $h\geq1$, whilst $\PG(n,q)$ will refer to the projective space of dimension $n$ over $\F_q$. Let $\mathbf{X}=(X_1,\ldots,X_{n+1})$ be the vector of the variables.

\begin{definition}
Let $P=(p_1,\ldots,p_{n+1})$ be a point of $\PG(n,q)$. The \emph{R\'edei factor} corresponding to $P$ is the linear polynomial $P\cdot\mathbf{X}= p_1X_1+\ldots+p_{n+1}X_{n+1}$.
\end{definition}

The zeros of a R\'edei factor $P\cdot\mathbf{X}$ are the Pl\"ucker coordinates of the hyperplanes through $P$.

The well-known \emph{R\'edei polynomial} of a point set $S$ is defined as the product of the R\'edei factors corresponding to the points of $S$. We are interested in a different polynomial.

\begin{definition}
Let $S=\{P_i\,:\,i=1,\ldots,\abs{S}\}\subseteq \PG(n,q)$ be a point set. The \emph{power sum polynomial} of $S$ is defined as
\begin{displaymath}
G^S(\mathbf{X}):=\sum_{i=1}^{\abs{S}}(P_i\cdot\mathbf{X})^{q-1}.
\end{displaymath}
\end{definition}

The power sum polynomial is therefore a homogeneous polynomial of degree $q-1$. Denote by $\F_q^{q-1}[\mathbf{X}]$ the set of homogeneous polynomials of degree $q-1$ in the variables $\mathbf{X}$ with coefficients in $\F_q$. It is a vector space over $\F_q$ with dimension $\binom{n+q-1}{n}$. For $n=2$, we will write
\begin{eqnarray*}
G^S(X,Y,Z)&=&\sum_{k=1}^{\abs{S}}(a_kX+b_kY+c_kZ)^{q-1}\\
&=&\sum_{k=1}^{\abs{S}}\sum_{i=0}^{q-1}\sum_{j=0}^{q-1-i}\binom{q-1}{i,j}\left(a_kX\right)^{q-1-i-j}\left(b_kY\right)^j\left(c_kZ\right)^i,
\end{eqnarray*}
where ${\displaystyle\binom{q-1}{i,j}=\frac{(q-1)!}{i!j!(q-1-i-j)!}}$ for $i+j\leq q-1$.

The power sum polynomial of the union of disjoint subsets of $\PG(n,q)$ is clearly the sum of the corresponding power sum polynomials. If a hyperplane $(x_1,\ldots,x_{n+1})$ intersects $S$ in $m$ points, then, from the well-known fact that
\begin{displaymath}
\forall \alpha\in\F_q:\alpha^{q-1}=
\left\{
\begin{array}{ll}
0&\text{ if }\alpha=0,\\
1&\text{ otherwise},
\end{array}\right.
\end{displaymath}
it results that $G^S(x_1,\ldots,x_{n+1})=\abs{S}-m$.

As noted in \cite{sziklai}, while there is an one-to-one correspondence between R\'edei polynomials and point sets, the same power sum polynomial may refer to different point sets.

\begin{example}
Consider the Fano plane $\PG(2,2)$ and the subset $S_1=\{(0,0,1)\}$. The corresponding power sum polynomial is $G^{S_1}(X,Y,Z)=Z$, which is shared with other subsets of $\PG(2,2)$, such as $S_2=\{(1,0,1),(1,0,0)\}$ and $S_3=\{(1,0,0),(0,1,0),(1,1,1)\}$.
\end{example}

We address the following problem: Given a homogeneous polynomial $G$ of degree $q-1$, find all sets $S\subseteq\PG(2,q)$ such that $G^S=G$. The choice of considering the two-dimensional case is justified in the next section.

\section{Discrete tomography}\label{sec:tomography}
The problem stated at the end of the previous section belongs to the large class of the inverse problems, where an object has to be retrieved from the measurements of its features, broadly intended. In the same class one can find the \emph{tomographic problem}, whose measurements are the projections of the unknown object along a set of given directions. Tomography originates from the work of J.~Radon \cite{radon}, who proved that, under some regularity hypotheses, a density function defined over $\R^2$ can be reconstructed from its projections taken along angles in the whole range $[0,\pi)$. From \cite{radon} on, the tomographic problem has mainly addressed the two-dimensional case (in diagnostics imaging, for instance, the 3D scan of the body is obtained by stacking 2D slices), as we will do in the present paper. The function to be recovered is also called an \emph{image}.

When a finite set of directions is employed and the image is assumed to be defined on a finite subset of the lattice $\Z^2$, one deals with \emph{discrete tomography}. Directions are lattice directions, i.e., with rational slopes, and projections are obtained by summing up the values of the points which are intersections between lines with given direction and the domain (see Fig.~\ref{fig:proj}). Usually, the codomain of the image is (a subset of) $\Z$.

\begin{figure}[htbp]
\centering
\includegraphics[scale=.8]{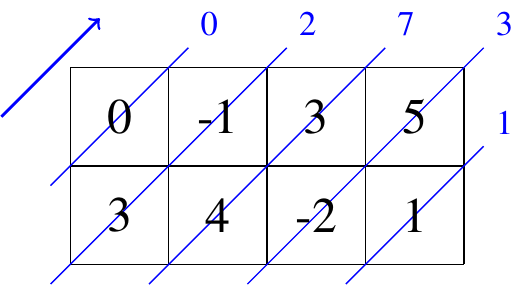}
\caption{A four by two integer-valued image and the projections along a direction.}
\label{fig:proj}
\end{figure}

In general, neither existence nor uniqueness of a solution applies to a discrete tomographic problem, which is ill-posed as most of the inverse problems. In fact, on the one side the available data may be inconsistent due to noise, so that no image exists with prescribed projections. On the other side, when dealing with consistent measurement (i.e., at least one solution exists) many images which agree with the projections could be allowed (see for instance \cite{fishepp}). The lack of uniqueness is caused by the presence of images whose projections along the set of considered directions are zero, and therefore are, in a sense, invisible to measurements. Such images are known as \emph{ghosts} and constitute the kernel of a suitable linear system, obtained by ordering the projections in some way.

Linear algebra and \cite{hati} show how to treat ghosts in order to move among the solutions of a tomographic problem. In fact, every solution of a same problem may be obtained as the sum between a peculiar solution and a suitable ghost (see Fig.~\ref{fig:ghost} for an example). We investigate their counterpart in the power sum polynomial problem, which will be called \emph{ghosts} as well, in particular in the two-dimensional case.

\begin{figure}[htbp]
\centering
\includegraphics[scale=.7]{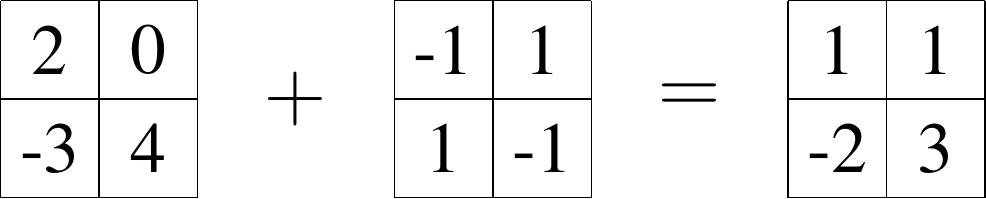}
\caption{Adding a ghost (w.r.t.~the coordinate directions in this case) to an image pro\-duces a new image, whose horizontal and vertical projections equal those of the starting image.}
\label{fig:ghost}
\end{figure}

\subsection{Connections and differences}
Our goal is to determine all point sets of $\PG(2,q)$ sharing the same power sum polynomial. Arguing in analogy with the tomographic framework, the role of the grid (i.e., the domain) and that of an image are played by $\PG(2,q)$ and a point set $S$, respectively. The counterpart of tomographic ghosts is defined as follows.

\begin{definition}
Let $S\subseteq\PG(2,q)$. We say that $S$ is a \emph{ghost} if $G^S(X,Y,Z)\equiv\mathbf{0}$.
\end{definition}

Note that, in the tomographic case, the set of directions has to be specified, while the definition of the power sum polynomial does not depend on other features. We study the structure of ghosts and look for an operation which maps a pair, consisting of a generic point set and a ghost, to another point set with the same power sum polynomial as the previous generic one. The investigation of ghosts therefore completes the search of subsets $S$ such that $G^S=G$ for a given polynomial $G$.

\section{Ghosts and multiset sum in $\PG(2,q)$}\label{sec:generalresults}
As stated in the previous section, we have to study the structure of ghosts. We first prove some general results about classes of ghosts.

In order to prove that a point set $S\subseteq\PG(2,q)$ has the zero polynomial as the corresponding one, we have to find when $G^S$ vanishes identically. We look at the intersections of $S$ with lines, which are the hyperplanes in the plane. In fact, if $\abs{S\cap\ell}=m$ for a line $\ell$, then $G^S(\ell)=\abs{S}-m$, where, by an abuse of notation, we write that the polynomial is computed in $\ell$ instead of the Pl\"ucker coordinates of $\ell$.

Therefore, as $\deg G^S<q$, we seek those point sets whose intersections with all lines of $\PG(2,q)$ have size congruent to that of $S$ modulo the field characteristic $p$. According to \cite[Theorem 13.5]{sziklai}, such point sets $S$ are exactly those intersecting each line in a fixed (mod $p$) number of points. So, our ghosts are the \emph{generalized Vandermonde sets} defined in \cite{sziklai}.

\begin{theorem}\label{teo:complement}
Let $S\subseteq\PG(2,q)$ be a ghost. Then $\PG(2,q)\backslash S$ is a ghost.
\end{theorem}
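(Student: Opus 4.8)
The plan is to exploit the additivity of the power sum polynomial over disjoint unions together with the explicit value $G^{\PG(2,q)}$ of the power sum polynomial of the whole plane. Since $\PG(2,q) = S \sqcup (\PG(2,q)\setminus S)$ is a disjoint union, we have
\begin{displaymath}
G^{\PG(2,q)}(X,Y,Z) = G^S(X,Y,Z) + G^{\PG(2,q)\setminus S}(X,Y,Z).
\end{displaymath}
Because $S$ is a ghost, $G^S \equiv \mathbf{0}$, so $G^{\PG(2,q)\setminus S} = G^{\PG(2,q)}$. It therefore suffices to show that $\PG(2,q)$ itself is a ghost, i.e.\ that $G^{\PG(2,q)} \equiv \mathbf{0}$.

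To see this, I would use the line-intersection characterization recalled just before the statement: for any line $\ell$, $G^{\PG(2,q)}(\ell) = \abs{\PG(2,q)} - \abs{\PG(2,q)\cap\ell} = (q^2+q+1) - (q+1) = q^2$, which is $\equiv 0 \pmod p$. Hence the polynomial $G^{\PG(2,q)}$, which has degree $q-1 < q$, vanishes at the Pl\"ucker coordinates of every line of $\PG(2,q)$; equivalently, regarding it as a function, it vanishes on every point of the dual plane $\PG(2,q)$. A homogeneous polynomial of degree at most $q-1$ in three variables that vanishes at all points of $\PG(2,q)$ must be the zero polynomial — this is a standard fact, and indeed it is exactly the principle underlying the whole ``polynomial technique'' and Theorem~13.5 of \cite{sziklai} invoked above (the map from $\F_q^{q-1}[\mathbf{X}]$ to functions on $\PG(2,q)$ is injective, or one argues directly that a nonzero such polynomial has too few zeros by Bézout-type counting). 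Therefore $G^{\PG(2,q)} \equiv \mathbf{0}$, and combining with the displayed identity gives $G^{\PG(2,q)\setminus S} \equiv \mathbf{0}$, i.e.\ $\PG(2,q)\setminus S$ is a ghost.

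Alternatively — and perhaps more in the spirit of the paper — one can avoid appealing to injectivity of the coefficient-to-function map by noting that the complement of a generalized Vandermonde set is again one: if $S$ meets every line in $m \pmod p$ points, then $\PG(2,q)\setminus S$ meets every line in $(q+1) - m \pmod p$ points, a fixed residue, so by \cite[Theorem 13.5]{sziklai} it too has a power sum polynomial constant on all lines; since a ghost is precisely a generalized Vandermonde set with $\abs{S}\equiv 0$, and $\abs{\PG(2,q)\setminus S} = q^2+q+1-\abs{S} \equiv 1 - 0 \equiv \abs{\PG(2,q)} \pmod p$ is not automatically $0$, one needs the extra input that $q^2+q+1 \equiv 1 \pmod p$ forces the constant to be $0$ only after subtracting off $G^{\PG(2,q)}$ — so this route still reduces to showing $\PG(2,q)$ is a ghost. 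I expect the only real point requiring care is this last fact, $G^{\PG(2,q)}\equiv\mathbf{0}$: it is not merely ``$q^2\equiv 0$'' but requires knowing that a degree-$(q-1)$ form vanishing on all of $\PG(2,q)$ is identically zero. Everything else is the one-line additivity argument.
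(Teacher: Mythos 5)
Your main argument is correct, but it takes a genuinely different route from the paper. The paper's proof is a one-line appeal to the line-intersection characterization of ghosts established just before the statement: if $S$ meets every line in a constant number of points mod $p$, so does $\PG(2,q)\setminus S$ (namely $1-m$ mod $p$, since $q+1\equiv 1$), hence the complement is again a generalized Vandermonde set and therefore a ghost. Your route instead uses additivity over the disjoint union $\PG(2,q)=S\sqcup(\PG(2,q)\setminus S)$ and reduces everything to showing $G^{\PG(2,q)}\equiv\mathbf{0}$, which you establish correctly (every line meets the plane in $q+1$ points, so $G^{\PG(2,q)}(\ell)=q^2\equiv 0$, and a degree-$(q-1)$ form vanishing on the whole dual plane is identically zero). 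This is in fact exactly the argument the paper itself deploys later for the multiset version of this theorem, where it writes $G^{\PG(2,q)\setminus S}=G^{\PG(2,q)}-G^S=0-0=0$; so your proof generalizes more readily, at the cost of needing the whole plane to be a ghost as an extra input. Note that both routes ultimately rest on the same injectivity fact (a form of degree $<q$ vanishing at all points of the dual plane is the zero polynomial), which the paper delegates to \cite[Theorem 13.5]{sziklai}.

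One correction to your ``alternative'' paragraph: a ghost is \emph{not} ``a generalized Vandermonde set with $\abs{S}\equiv 0$'' --- a single line is a ghost with $q+1\equiv 1\pmod p$ points. The condition $G^S(\ell)=\abs{S}-m\equiv 0$ for all $\ell$ only forces $m\equiv\abs{S}\pmod p$, and indeed constancy of $m$ mod $p$ already implies $m\equiv\abs{S}$ automatically (count incidences on the pencil through a point of $S$). Consequently the line-intersection route closes directly, without any reduction to $G^{\PG(2,q)}\equiv\mathbf{0}$: the complement meets every line in $1-m\equiv\abs{\PG(2,q)\setminus S}\pmod p$ points, so its power sum polynomial vanishes on every line and hence identically. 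This is precisely the paper's (shorter) proof.
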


\proof Since $S$ has constant intersection size with lines, the same holds true for its complement $\PG(2,q)\backslash S$. \endproof

As a consequence, both the empty set and the whole projective plane are ghosts.

\begin{theorem}\label{teo_pencil}
A partial pencil $\mathcal{P}$ of $\lambda p+1$ lines, $\lambda=0,\ldots,p^{h-1}$, is a ghost. Consequently, a set of $q-\lambda p$ lines through a point $P$ minus $P$ is a ghost.
\end{theorem}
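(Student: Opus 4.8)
The plan is to verify that a partial pencil $\mathcal{P}$ of $\lambda p+1$ lines has constant intersection size with every line of $\PG(2,q)$, and then invoke the criterion recalled before Theorem~\ref{teo:complement} (a point set is a ghost iff it meets every line in a number of points that is constant modulo $p$). Let $P$ be the common point of the lines in $\mathcal{P}$ — if $\lambda=0$ the statement degenerates to a single line, which trivially meets every other line in $0$ or $1$ points, a case already covered. Now take an arbitrary line $\ell$ of $\PG(2,q)$ and count $\abs{\mathcal{P}\cap\ell}$ by splitting into two cases according to whether $\ell$ passes through $P$.

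First I would handle the case $P\notin\ell$. Each of the $\lambda p+1$ lines of $\mathcal{P}$ meets $\ell$ in exactly one point, and these points are pairwise distinct: if two lines of the pencil met $\ell$ in the same point $Q$, they would share the two distinct points $P$ and $Q$, forcing them to coincide. Hence $\abs{\mathcal{P}\cap\ell}=\lambda p+1$, where here $\mathcal{P}$ is viewed as the point set which is the union of the $\lambda p+1$ lines. Second, suppose $P\in\ell$. If $\ell$ is one of the lines of $\mathcal{P}$, then $\mathcal{P}\cap\ell=\ell$ has $q+1$ points; if $\ell\notin\mathcal{P}$, then $\mathcal{P}\cap\ell=\{P\}$ has $1$ point. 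So the intersection sizes occurring are $\lambda p+1$, $q+1=p^h+1$, and $1$, all of which are congruent to $1$ modulo $p$. By the cited characterization, $\mathcal{P}$ is a ghost.

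For the ``consequently'' clause, let $\mathcal{Q}$ be a set of $q-\lambda p$ lines through $P$, and consider the point set $\mathcal{Q}\setminus\{P\}$. I would obtain it as the complement, inside the full pencil of $q+1$ lines through $P$ (which is all of $\PG(2,q)$, hence a ghost by the remark after Theorem~\ref{teo:complement}), of the partial pencil $\mathcal{P}'$ consisting of the remaining $(q+1)-(q-\lambda p)=\lambda p+1$ lines through $P$. Since $\mathcal{P}'$ is a ghost by the first part and $\PG(2,q)$ is a ghost, Theorem~\ref{teo:complement} applied to $\mathcal{P}'$ shows $\PG(2,q)\setminus\mathcal{P}'$ is a ghost. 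A direct check shows $\PG(2,q)\setminus\mathcal{P}'$ is precisely the set of points lying on some line of $\mathcal{Q}$ but not equal to $P$, i.e.\ $\mathcal{Q}\setminus\{P\}$: every point of $\PG(2,q)$ other than $P$ lies on a unique line through $P$, which is either in $\mathcal{Q}$ or in $\mathcal{P}'$.

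The only subtlety — and the step I expect to need the most care — is the bookkeeping of ``which point set'' is meant: a pencil of lines is being treated as the set of points it covers, and one must make sure the counts $\lambda p+1$ versus $q+1$ versus $1$ are genuinely all in one residue class mod $p$ (they are, all equal to $1$), and that removing the single point $P$ in the second statement does not disturb constancy mod $p$ — which it does not, because passing to the complement of a ghost (here $\mathcal{P}'$, a ghost containing $P$) automatically takes care of this. No real computation is involved; the argument is entirely a line-counting argument combined with Theorem~\ref{teo:complement}.
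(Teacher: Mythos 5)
Your proof is correct and follows essentially the same route as the paper: both verify that the partial pencil meets every line of $\PG(2,q)$ in $1$, $\lambda p+1$ or $q+1$ points, all congruent to $1$ modulo $p$, and your derivation of the ``consequently'' clause by complementing the residual partial pencil of $\lambda p+1$ lines inside the whole plane is exactly the intended use of Theorem~\ref{teo:complement}. One tiny slip in the $\lambda=0$ aside: a single line of $\PG(2,q)$ meets every \emph{other} line in exactly one point, never zero (there are no parallels in a projective plane) --- which is fortunately the fact your main case analysis actually relies on.
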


\begin{proof}
Every line $\ell$ meets $\mathcal{P}$ in either one, $\lambda p+1$ or $q+1$ points. In all cases $m=1\mod p$ and
\begin{displaymath}
G^{\mathcal{P}}(\ell)=(\lambda p+1)q+1-m=0\mod p.
\end{displaymath}
\end{proof}

Therefore, every line is a ghost, as well as any affine plane contained in $\PG(2,q)$.

It would be desirable to define a binary operation in order to endow the subsets of $\PG(2,q)$ with a group structure, so that the set of ghosts is stable under such an operation. Unfortunately, the usual set-theoretical union is not a good choice. Consider for instance the two lines $\ell_1,\ell_2$ of $\PG(2,2)$, whose points have coordinates satisfying $X=0$ and $Y=0$, respectively. Lines are ghosts of $\PG(2,2)$; their union consists of the five points $(0,1,0)$, $(0,0,1)$, $(0,1,1)$, $(1,0,0)$, $(1,1,0)$ and the corresponding power sum polynomial is
\begin{displaymath}
G^{\ell_1\cup\ell_2}=Y+Z+(Y+Z)+X+(X+Y)=Y,
\end{displaymath}
so the union of two ghosts is not a ghost in general. To deal with a deeper algebraic structure, we need to consider a different kind of operation.

\subsection{Multiset sum}
The structure of all solutions of a same tomographic problem can be described by linear algebra as the sum between a peculiar solution and the vector subspace of ghosts (see \cite{hati}). Concerning power sum polynomials, the usual set-theoretical union is not enough and has to be replaced with the multiset sum.

Let $A$ be a multiset, namely, a set whose elements can appear with multiplicity greater than one. Denote by $m_A(x)$ the multiplicity of the element $x$ in $A$, which is zero if $x\notin A$. The \emph{multiset sum} of two multisets $A$ and $B$ is a multiset $C$ such that, for any element $x$,
\begin{displaymath}
m_C(x)=m_A(x)+m_B(x).
\end{displaymath}
Denote the set of subsets $S$ of $\PG(2,q)$, where each point $P$ is counted $m_S(P)\mod p$ times, by $p^{\PG(2,q)}$. Of course $p^{\PG(2,q)}$ has $p^{q^2+q+1}$ elements. Moreover, consider the binary operation $\uplus_p$, which is the multiset sum modulo $p$.

\begin{lemma}
$\left(p^{\PG(2,q)},\uplus_p\right)$ is an abelian $p$-group.
\end{lemma}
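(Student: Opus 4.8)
The plan is to verify directly that $\left(p^{\PG(2,q)},\uplus_p\right)$ satisfies the axioms of an abelian group in which every element has order dividing $p$. The natural way to organize this is to observe that $p^{\PG(2,q)}$ is nothing but the set of functions $f\colon \PG(2,q)\to\Z/p\Z$, where $f(P)=m_S(P)\bmod p$ records the multiplicity with which the point $P$ occurs; under this identification the operation $\uplus_p$ corresponds to pointwise addition in $\Z/p\Z$, since $m_{A\uplus_p B}(P)=\bigl(m_A(P)+m_B(P)\bigr)\bmod p$. Hence $p^{\PG(2,q)}$ is exactly the direct product $\bigl(\Z/p\Z\bigr)^{\PG(2,q)}$, a product of $q^2+q+1$ copies of the cyclic group $\Z/p\Z$ indexed by the points of the plane.

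From this identification the claims are immediate. First, $\uplus_p$ is well defined as a binary operation on $p^{\PG(2,q)}$, because reducing multiplicities modulo $p$ commutes with addition; associativity and commutativity follow coordinatewise from the associativity and commutativity of addition in $\Z/p\Z$. The identity element is the empty set $\emptyset$ (equivalently the zero function), and the inverse of a multiset $S$ with multiplicity function $m_S$ is the multiset with multiplicity function $(-m_S)\bmod p = (p-m_S)\bmod p$, which again lies in $p^{\PG(2,q)}$. Thus $\left(p^{\PG(2,q)},\uplus_p\right)$ is an abelian group. Moreover, adding any element $S$ to itself $p$ times gives the multiplicity function $p\cdot m_S\equiv 0\pmod p$, i.e.\ the identity, so every nonidentity element has order exactly $p$; since $\abs{p^{\PG(2,q)}}=p^{q^2+q+1}$ is a power of $p$, the group is a (finite abelian) $p$-group. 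Incidentally this also re-confirms the stated cardinality $p^{q^2+q+1}$, as each of the $q^2+q+1=\abs{\PG(2,q)}$ points contributes an independent factor of $p$.

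There is no real obstacle here; the only point requiring a word of care is checking that $\uplus_p$ is genuinely an internal operation on $p^{\PG(2,q)}$ — that is, that one does not need to first form the ordinary multiset sum and then reduce, but that the reduced-multiplicity representatives are themselves closed under the operation — and this is exactly the remark that reduction mod $p$ is a ring (indeed group) homomorphism $\Z\to\Z/p\Z$. So the write-up will simply make the identification $p^{\PG(2,q)}\cong(\Z/p\Z)^{\PG(2,q)}$ explicit and then invoke the standard fact that a finite direct power of $\Z/p\Z$ is an elementary abelian $p$-group.
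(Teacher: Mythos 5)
Your proof is correct and is essentially the paper's argument: the paper also verifies the group axioms directly, exhibits the empty set as identity and the multiset with multiplicities $p-\lambda_i$ as inverse, and notes that every non-identity element has period $p$. Your explicit identification $p^{\PG(2,q)}\cong(\Z/p\Z)^{\PG(2,q)}$ is just a cleaner packaging of the same coordinatewise verification.
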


\proof The operation $\uplus_p$ is clearly internal to $p^{\PG(2,q)}$, associative and commutative. The identity element is the empty set and the inverse of a subset $S$, whose element $P_i$ is counted $\lambda_i$ times ($i=1,\ldots,\abs{S}$), is the subset where $P_i$ is counted $p-\lambda_i$ times.

Also, $p^{\PG(2,q)}$ is a $p$-group since every non-identity element has period $p$. \qed

It follows that the map
\begin{displaymath}
\varphi:\left\{
\begin{array}{rcl}
p^{\PG(2,q)} & \longrightarrow & \F_q^{q-1}[X,Y,Z],\\
S & \longmapsto & G^S
\end{array}\right.
\end{displaymath}
preserves the group operation, i.e.,
\begin{displaymath}
\varphi\left(S_1\uplus_p S_2\right)=G^{S_1\uplus_p S_2}=G^{S_1}+G^{S_2}=\varphi(S_1)+\varphi(S_2).
\end{displaymath}
Therefore $\varphi$ is a homomorphism between abelian groups. In particular, if $A\subseteq B$, then $G^{B\setminus A}=G^B-G^A$.

The kernel of $\varphi$ is the set of ghosts $\mathcal{G}$, which is indeed a subgroup of $p^{\PG(2,q)}$. Moreover
\begin{displaymath}
p^{\PG(2,q)}\,/\,\ker\varphi\,\cong\text{ im}\varphi.
\end{displaymath}

We now extend the results of Theorems \ref{teo:complement} and \ref{teo_pencil} to multisets. For a subset $A$ of a multiset $B$, we define the complement of $A$ in $B$ as the multiset where each element $b\in B$ is counted $m_B(b)-m_A(b)$ times.

\begin{theorem}
Let $S$ be a ghost. Then the complement of $S$ is a ghost. Moreover, lines and (multiset) sums of lines are ghosts of $\PG(2,q)$.
\end{theorem}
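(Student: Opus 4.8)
The plan is to leverage the homomorphism $\varphi$ established just above the statement, which reduces everything to a routine diagram chase inside the abelian $p$-group $\left(p^{\PG(2,q)},\uplus_p\right)$. For the complement claim, I would first note that the ambient space $\PG(2,q)$, viewed as the subset in which every point has multiplicity $1$, is a ghost by Theorem \ref{teo:complement} (or directly: it meets every line in $q+1\equiv 1\pmod p$ points, wait—more carefully, $|\PG(2,q)|-|\ell|=(q^2+q+1)-(q+1)=q^2\equiv 0\pmod p$). Then the complement of $S$ in $\PG(2,q)$ is $\PG(2,q)\uplus_p(\text{inverse of }S)$ in the group, and since $\ker\varphi=\mathcal{G}$ is a subgroup it is closed under the group inverse and under $\uplus_p$ with another ghost; hence the complement of a ghost is a ghost. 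More generally, for $A\subseteq B$ with $B$ a multiset, the complement of $A$ in $B$ equals $B\uplus_p(\text{inverse of }A)$, so $G^{\text{compl}}=G^B-G^A$, which vanishes whenever $B$ and $A$ are both ghosts.

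For the second claim, that lines and multiset sums of lines are ghosts, I would invoke Theorem \ref{teo_pencil} (or the remark immediately after it) to get that a single line is a ghost: a line $\ell'$ meets another line $\ell$ in exactly $1$ point if $\ell\neq\ell'$ and in $q+1\equiv 1\pmod p$ points if $\ell=\ell'$, so $G^{\ell'}(\ell)=|\ell'|-m=(q+1)-1=q\equiv0$ or $(q+1)-(q+1)=0$ in every case. Since $\mathcal{G}=\ker\varphi$ is a subgroup of $p^{\PG(2,q)}$, it is closed under $\uplus_p$; therefore any finite multiset sum $\ell_1\uplus_p\cdots\uplus_p\ell_r$ of lines (with arbitrary multiplicities, reduced mod $p$) again lies in $\mathcal{G}$, i.e.\ is a ghost. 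This is the entire content—no genuine obstacle arises, because the group-theoretic scaffolding does all the work.

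The only point requiring a small amount of care, which I would flag rather than grind through, is the bookkeeping when a multiset sum of lines causes some point (e.g.\ a common point of several of the lines) to acquire multiplicity $\geq p$ and thus get reduced modulo $p$: one must check that $\varphi$ is genuinely well-defined on $p^{\PG(2,q)}$, i.e.\ that $(P\cdot\mathbf X)^{q-1}$ contributes the same way whether a point is counted $m$ times or $m\bmod p$ times. This is immediate since the coefficients of $G^S$ live in $\F_q$, which has characteristic $p$, so $p\cdot(P\cdot\mathbf X)^{q-1}=0$; this is exactly why the homomorphism statement preceding the theorem is legitimate, and I would simply cite it. Thus the proof is a two-line consequence of the fact that ghosts form the kernel of the homomorphism $\varphi$ together with the already-established facts that $\PG(2,q)$ and individual lines are ghosts.
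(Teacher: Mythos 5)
Your proof is correct and follows essentially the same route as the paper: the complement claim is the identity $G^{\PG(2,q)\setminus S}=G^{\PG(2,q)}-G^S=0$ (using that $\PG(2,q)$ itself is a ghost), and the claim about lines combines the fact that a single line is a ghost (Theorem \ref{teo_pencil} with $\lambda=0$, which you also reverify directly) with the closure of $\ker\varphi$ under $\uplus_p$. Your extra remark on the well-definedness of $\varphi$ modulo $p$ is a reasonable point of care but is already implicit in the paper's setup of the homomorphism.
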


\proof It results $G^{\PG(2,q)\setminus S}=G^{\PG(2,q)}-G^S=0-0=0$. The statement about lines follows from Theorem \ref{teo_pencil} and the properties of $\uplus_p$. \qed

\section{The case $q=p$}\label{sec:p}
From now on we focus on the case $q=p^h=p$ (i.e., $h=1$), namely, we refer to prime fields. In order to get information about the size of the subgroup of ghosts, we argue on the size of the image of $\varphi$.

\subsection{Surjectivity of $\varphi$}
We now prove that, for $q=p$ prime, the dimension of im$\varphi$ is $\binom{p+1}{2}$, namely, the function $\varphi$ is onto.

If $p=2$, it is trivial to prove that the images of points $(1,0,0)$, $(0,1,0)$, $(0,0,1)$ constitute a basis of $\F_2^1[X,Y,Z]$. For $p$ odd, we will prove that the set consisting of the images of the $\binom{p+1}{2}$ points
\begin{equation}\label{eq:base}
\begin{array}{l}
(1,0,0),(1,0,1),(1,0,2),\ldots,(1,0,p-1),\\
(1,1,0),(1,1,1),\ldots,(1,1,p-2),\\
\ldots,\\
(1,p-2,0),(1,p-2,1),\\
(1,p-1,0)
\end{array}
\end{equation}
is linearly independent. This can be done by showing that the matrix having as rows the components (w.r.t.~the canonical basis of $\F_p^{p-1}[X,Y,Z]$) of the images of the above points is non-singular. Each entry of the matrix is the coefficient of a certain monomial in the development of $(X+bY+cZ)^{p-1}$, where $(1,b,c)$ is one of the above points. Since from the column corresponding to the monomial $X^{p-1-i-j}Y^jZ^i$ we can extract the coefficient $\binom{p-1}{i,j}$ and this does not interfere with the fact that the determinant is either zero or non-zero, we will omit the coefficients and simply write the value of $b^jc^i$. By reordering rows and columns in a suitable way we get the matrix in Table \ref{tab2}, which is a block lower triangular matrix. Its determinant is given by the product of the determinants of its blocks. The four blocks are: the one by one upper leftmost block $(1)$, two equal blocks
\begin{displaymath}
\left(
\begin{array}{ccccc}
1&1&1&\ldots&1\\
2&2^2&2^3&\ldots&1\\
\vdots&&&&\\
p-1&(p-1)^2&(p-1)^3&\ldots&1
\end{array} \right)
\end{displaymath}
and the remaining block of order $\frac{(p-2)(p-1)}{2}$.

\begin{table}[tb]
{\scriptsize
\begin{center}
\begin{tabular}[htb]{c|ccccccccccccc}
&1&$b$&$b^2$&\ldots&$b^{p-1}$&$c$&$c^2$&\ldots&$c^{p-1}$&$bc$&$b^2c$&\ldots&$bc^{p-2}$\\
\hline
$(1,0,0)$&1&0&\ldots&&&&&&&&&&\\
$(1,1,0)$&1&1&1&\ldots&1&0&\ldots&&&&&&\\
$(1,2,0)$&1&2&$2^2$&\ldots&1&0&\ldots&&&&&&\\
\vdots&\vdots&&&&&&&&&&&&\\
$(1,p-1,0)$&1&$p-1$&$(p-1)^2$&\ldots&1&0&\ldots&&&&&&\\
$(1,0,1)$&1&0&0&\ldots&0&1&1&\ldots&1&0&\ldots&&\\
$(1,0,2)$&1&0&0&\ldots&0&2&$2^2$&\ldots&1&0&\ldots&&\\
\vdots&\vdots&&&&&&&&&&&&\\
$(1,0,p-1)$&1&0&0&\ldots&0&$p-1$&$(p-1)^2$&\ldots&1&0&\ldots&&\\
$(1,1,1)$&1&1&1&\ldots&1&1&1&\ldots&1&1&1&\ldots&1\\
\vdots&\vdots&&&&&&&&&&&&\\
$(1,p-2,1)$&1&$p-2$&$(p-2)^2$&\ldots&1&1&1&\ldots&1&$p-2$&$(p-2)^2$&\ldots&$p-2$\\
\end{tabular}
\caption{The initial matrix.}\label{tab2}
\end{center}
}
\end{table}

The first three blocks are non-singular (in particular, the second and the third blocks are Vandermonde matrices), so the matrix has non-zero determinant if and only if the fourth block has.

The columns of the fourth block are indexed as $bc,b^2c,bc^2,\ldots,bc^{p-2}$, so we can extract a factor $bc$ from every row to get, after reordering rows and columns suitably, the matrix in Table \ref{tab3}.

\begin{table}[tb]
{\tiny
\begin{center}
\begin{tabular}[htb]{c|cccccccccccc}
&1&$b$&$b^2$&\ldots&$b^{p-3}$&$c$&$bc$&\ldots&$b^{p-4}c$&$c^2$&\ldots&$c^{p-3}$\\
\hline
$(1,1,1)$&1&1&1&\ldots&&&&&&&&\\
$(1,2,1)$&1&2&$2^2$&\ldots&$2^{p-3}$&1&2&\ldots&$2^{p-4}$&1&\ldots&1\\
$(1,3,1)$&1&3&$3^2$&\ldots&$3^{p-3}$&1&3&\ldots&$3^{p-4}$&1&\ldots&1\\
\vdots&\vdots&&&&&&&&&&&\\
$(1,p-2,1)$&1&$p-2$&$(p-2)^2$&\ldots&$(p-2)^{p-3}$&1&$p-2$&\ldots&$(p-2)^{p-4}$&1&\ldots&1\\
$(1,1,2)$&1&1&1&\ldots&1&2&2&\ldots&2&$2^2$&\ldots&$2^{p-3}$\\
$(1,2,2)$&1&2&$2^2$&\ldots&$2^{p-3}$&2&$2\cdot2$&\ldots&$2^{p-4}\cdot2$&$2^2$&\ldots&$2^{p-3}$\\
\vdots&\vdots&&&&&&&&&&&\\
$(1,p-3,2)$&1&$p-3$&$(p-3)^2$&\ldots&$(p-3)^{p-3}$&2&$(p-3)\cdot2$&\ldots&$(p-3)^{p-4}\cdot2$&$2^2$&\ldots&$2^{p-3}$\\
$(1,1,3)$&1&1&1&\ldots&1&3&3&\ldots&3&$3^2$&\ldots&$3^{p-3}$\\
\vdots&\vdots&&&&&&&&&&&\\
$(1,1,p-2)$&1&1&1&\ldots&1&$p-2$&$p-2$&$p-2$&\ldots&$(p-2)^2$&\ldots&$(p-2)^{p-3}$\\
\end{tabular}
\caption{The $\frac{(p-2)(p-1)}{2}$ lower leftmost block.}\label{tab3}
\end{center} }
\end{table}

In order to prove that the determinant of the matrix in Table \ref{tab3} is non-zero, we apply some basic operations to its rows in order to obtain a block upper triangular matrix, whose blocks are Vandermonde matrices and which is singular if and only if the starting matrix is. The procedure consists of $p-2$ steps; in each of them we consider some rows as \emph{pivotal} and substitute to a non-pivotal row a linear combination of its and the corresponding pivotal row, in order to have zeroes as entries in the columns corresponding to a certain power of $c$.

Denote by $(1,b,c)^{(n)}$ the row corresponding to the image of the point $(1,b,c)$ at the $n$-th step of the procedure. It is defined recursively as follows:
\begin{eqnarray*}
(1,b,c)^{(0)} & : & \text{the row corresponding to }\varphi(1,b,c),\\
(1,b,c)^{(1)} & := & (1,b,c)^{(0)}-(1,b,1)^{(0)}, \quad c\geq2,\\
(1,b,c)^{(n)} & := & \frac{(1,b,c)^{(n-1)}}{c-(n-1)}-(1,b,n)^{(n-1)}, \quad n\geq2,c\geq n+1,
\end{eqnarray*}
where the subtraction is intended to be element-wise. The third entry is at least $n+1$ since at Step $n$ the pivotal rows are $(1,b,n)^{(n-1)}$, $b=1,\ldots,p-1-n$, so only rows such that $c\geq n+1$ will be modified.

After each step, the matrix becomes block upper triangular consisting of two blocks: the upper leftmost block is a Vandermonde matrix and the other block has to be treated at the next step. Rows in the matrix in Table \ref{tab3} are ordered such that the first $p-2$ rows ($(1,b,1)$, $b=1,\ldots,p-2$) are the pivotal ones in the first step, the following $p-3$ ones ($(1,b,2)$, $b=1,\ldots,p-3$) will be the pivotal rows in the second step after the operations, and so on.

At Step $n$, the row $(1,b,c)^{(n)}$ has the entry corresponding to the column $b^{\lambda}c^{\,\mu}$ equal to:
\begin{itemize}
\item for $n=1$: $b^{\lambda}\left(c^{\mu}-1\right)$;
\item for $n=2n'$, $n'\geq1$:
\begin{displaymath}
\begin{array}{l}
\displaystyle
b^{\lambda}\sum_{i_1=0}^{\mu-2}\left(2^{\mu-1-i_1}-1\right)\sum_{i_2=0}^{i_1-2} \left(4^{i_1-1-i_2}-3^{i_1-1-i_2}\right)\sum_{i_3=0}^{i_2-2}\ldots \\
\displaystyle
\cdot\sum_{i_{n'}=0}^{i_{n'-1}-2}(c-n) \left(n^{i_{n'-1}-1-i_{n'}}-(n-1)^{i_{n'-1}-1-i_{n'}}\right)c^{i_{n'}};
\end{array}
\end{displaymath}
\item for $n=2n'+1$, $n'\geq1$:
\begin{displaymath}
\begin{array}{l}
\displaystyle
b^{\lambda}\sum_{i_1=0}^{\mu-2}\left(2^{\mu-1-i_1}-1\right)\sum_{i_2=0}^{i_1-2} \left(4^{i_1-1-i_2}-3^{i_1-1-i_2}\right)\sum_{i_3=0}^{i_2-2}\ldots\\
\displaystyle
\cdot\sum_{i_{n'}=1}^{i_{n'-1}-2} \left((2n')^{i_{n'-1}-1-i_{n'}}-(2n'-1)^{i_{n'-1}-1-i_{n'}}\right)\left(c^{i_{n'}}-n^{i_{n'}}\right).
\end{array}
\end{displaymath}
\end{itemize}
The above formulas may be proven by induction on $n$. Note that, for $\mu<n$, there are empty summations and therefore the entry is zero. Moreover, all elements of row $(1,b,c)^{(n)}$ are divisible by $c-n$ even over the integers.

In particular, the pivotal rows at the next step, say $n+1$, have a Vandermonde submatrix in the columns corresponding to the power $b^{\lambda}c^{n}$, $\lambda=0,\ldots,p-2-n$ (it is obtained by substituting $\mu=n$ and $c=n+1$ in the above formulas; note that, for $\mu=n$, all summations reduce to a single term).

At the final step $p-2$, the resulting matrix is block triangular and each block is a Vandermonde matrix with no repeated rows, so non-singular. We can conclude that the images of points in \eqref{eq:base} are linearly independent and then im$\varphi$ has dimension $\binom{p+1}{2}$.

In the next example we show how the procedure works.

\begin{example}\label{ex7}
Set $p=7$ and consider the $\frac{5\cdot6}{2}=15$ points
\begin{eqnarray*}
&&(1,1,1),(1,1,2),(1,1,3),(1,1,4),(1,1,5),(1,2,1),(1,2,2),(1,2,3),(1,2,4),\\
&&(1,3,1),(1,3,2),(1,3,3),(1,4,1),(1,4,2),(1,5,1),
\end{eqnarray*}
which are those, among the $\binom{7+1}{2}=28$ points whose images constitute a basis for im$\varphi$, having no zero coordinate. This means that the above points are those whose images constitute the block on which the procedure is applied.

The initial matrix is
\begin{displaymath}
\begin{array}{c|ccccccccccccccc}
&1&b&b^2&b^3&b^4&c&bc&b^2c&b^3c&c^2&bc^2&b^2c^2&c^3&bc^3&c^4\\
\hline
(1,1,1)&1&1&1&1&1&1&1&1&1&1&1&1&1&1&1\\
(1,2,1)&1&2&2^2&2^3&2^4&1&2&2^2&2^3&1&2&2^2&1&2&1\\
(1,3,1)&1&3&3^2&3^3&3^4&1&3&3^2&3^3&1&3&3^2&1&3&1\\
(1,4,1)&1&4&4^2&4^3&4^4&1&4&4^2&4^3&1&4&4^2&1&4&1\\
(1,5,1)&1&5&5^2&5^3&5^4&1&5&5^2&5^3&1&5&5^2&1&5&1\\
(1,1,2)&1&1&1&1&1&2&2&2&2&2^2&2^2&2^2&2^3&2^3&2^4\\
(1,2,2)&1&2&2^2&2^3&2^4&2&2\cdot2&2^2\cdot2&2^3\cdot2&2^2&2\cdot2^2&2^2\cdot2^2&2^3&2\cdot2^3&2^4\\
(1,3,2)&1&3&3^2&3^3&3^4&2&3\cdot2&3^2\cdot2&3^3\cdot2&2^2&3\cdot2^2&3^2\cdot2^2&2^3&3\cdot2^3&2^4\\
(1,4,2)&1&4&4^2&4^3&4^4&2&4\cdot2&4^2\cdot2&4^3\cdot2&2^2&4\cdot2^2&4^2\cdot2^2&2^3&4\cdot2^3&2^4\\
(1,1,3)&1&1&1&1&1&3&3&3&3&3^2&3^2&3^2&3^3&3^3&3^4\\
(1,2,3)&1&2&2^2&2^3&2^4&3&2\cdot3&2^2\cdot3&2^3\cdot3&3^2&2\cdot3^2&2^2\cdot3^2&3^3&2\cdot3^3&3^4\\
(1,3,3)&1&3&3^2&3^3&3^4&3&3\cdot3&3^2\cdot3&3^3\cdot3&3^2&3\cdot3^2&3^2\cdot3^2&3^3&3\cdot3^3&3^4\\
(1,1,4)&1&1&1&1&1&4&4&4&4&4^2&4^2&4^2&4^3&4^3&4^4\\
(1,2,4)&1&2&2^2&2^3&2^4&4&2\cdot4&2^2\cdot4&2^3\cdot4&4^2&2\cdot4^2&2^2\cdot4^2&4^3&2\cdot4^3&4^4\\
(1,1,5)&1&1&1&1&1&5&5&5&5&5^2&5^2&5^2&5^3&5^3&5^4
\end{array}
\end{displaymath}

The upper leftmost 5 by 5 block is a Vandermonde matrix and the corresponding rows will be the pivotal ones at Step 1. The other rows will be manipulated so that the first five entries are zero. Therefore, matrix at Step 1 is
\begin{displaymath}
\begin{array}{c|ccccccccccccccc}
&1&b&b^2&b^3&b^4&c&bc&b^2c&b^3c&c^2&bc^2&b^2c^2&c^3&bc^3&c^4\\
\hline
(1,1,1)&1&1&1&1&1&*&\ldots&&&&&&&&\\
(1,2,1)&1&2&2^2&2^3&2^4&*&\ldots&&&&&&&&\\
(1,3,1)&1&3&3^2&3^3&3^4&*&\ldots&&&&&&&&\\
(1,4,1)&1&4&4^2&4^3&4^4&*&\ldots&&&&&&&&\\
(1,5,1)&1&5&5^2&5^3&5^4&*&\ldots&&&&&&&&\\
(1,1,2)^{(1)}&0&0&0&0&0&1&1&1&1&3&3&3&7&7&15\\
(1,2,2)^{(1)}&0&0&0&0&0&1&2&2^2&2^3&3&2\cdot3&2^2\cdot3&7&2\cdot7&15\\
(1,3,2)^{(1)}&0&0&0&0&0&1&3&3^2&3^3&3&3\cdot3&3^2\cdot3&7&3\cdot7&15\\
(1,4,2)^{(1)}&0&0&0&0&0&1&4&4^2&4^3&3&4\cdot3&4^2\cdot3&7&4\cdot7&15\\
(1,1,3)^{(1)}&0&0&0&0&0&2&2&2&2&8&8&8&26&26&80\\
(1,2,3)^{(1)}&0&0&0&0&0&2&2\cdot2&2^2\cdot2&2^3\cdot2&8&2\cdot8&2^2\cdot8&26&2\cdot26&80\\
(1,3,3)^{(1)}&0&0&0&0&0&2&3\cdot2&3^2\cdot2&3^3\cdot2&8&3\cdot8&3^2\cdot8&26&3\cdot26&80\\
(1,1,4)^{(1)}&0&0&0&0&0&3&3&3&3&15&15&15&63&63&255\\
(1,2,4)^{(1)}&0&0&0&0&0&3&2\cdot3&2^2\cdot3&2^3\cdot3&15&2\cdot15&2^2\cdot15&63&2\cdot63&255\\
(1,1,5)^{(1)}&0&0&0&0&0&4&4&4&4&24&24&24&124&124&624
\end{array}
\end{displaymath}
where the asterisk means any value. In Step 2 we focus on the 10 by 10 lower rightmost block. Rows corresponding to $(1,b,2)^{(1)}$, $b=1,2,3,4$, form a Vandermonde matrix in the first four columns and they will be the pivotal ones. The (sub-)matrix at Step 2 is
\begin{displaymath}
\begin{array}{c|cccccccccc}
&c&bc&b^2c&b^3c&c^2&bc^2&b^2c^2&c^3&bc^3&c^4\\
\hline
(1,1,2)^{(1)}&1&1&1&1&*&\ldots&&&&\\
(1,2,2)^{(1)}&1&2&2^2&2^3&*&\ldots&&&&\\
(1,3,2)^{(1)}&1&3&3^2&3^3&*&\ldots&&&&\\
(1,4,2)^{(1)}&1&4&4^2&4^3&*&\ldots&&&&\\
(1,1,3)^{(2)}&0&0&0&0&1&1&1&6&6&25\\
(1,2,3)^{(2)}&0&0&0&0&1&2&2^2&6&2\cdot6&25\\
(1,3,3)^{(2)}&0&0&0&0&1&3&3^2&6&3\cdot6&25\\
(1,1,4)^{(2)}&0&0&0&0&2&2&2&14&14&70\\
(1,2,4)^{(2)}&0&0&0&0&2&2\cdot2&2^2\cdot2&14&2\cdot14&70\\
(1,1,5)^{(2)}&0&0&0&0&3&3&3&24&24&141
\end{array}
\end{displaymath}
At Step 3 we consider the 6 by 6 lower rightmost submatrix, whose pivotal rows will be $(1,b,3)^{(2)}$, $b=1,2,3$, and the remaining rows are modified as follows:
\begin{displaymath}
\begin{array}{c|cccccc}
&c^2&bc^2&b^2c^2&c^3&bc^3&c^4\\
\hline
(1,1,3)^{(2)}&1&1&1&*&\ldots&\\
(1,2,3)^{(2)}&1&2&2^2&*&\ldots&\\
(1,3,3)^{(2)}&1&3&3^2&*&\ldots&\\
(1,1,4)^{(3)}&0&0&0&1&1&10\\
(1,2,4)^{(3)}&0&0&0&1&2&10\\
(1,1,5)^{(3)}&0&0&0&2&2&22
\end{array}
\end{displaymath}
The matrix at Step 4 is the following one:
\begin{displaymath}
\begin{array}{c|ccc}
&c^3&bc^3&c^4\\
\hline
(1,1,4)^{(3)}&1&1&*\\
(1,2,4)^{(3)}&1&2&*\\
(1,1,5)^{(4)}&0&0&1
\end{array}
\end{displaymath}
At Step $5=7-2$ the one by one matrix $(1)$ is left. Therefore, the initial matrix has been transformed into a block triangular matrix, whose blocks are Vandermonde matrices with no rows repeated, and so non-singular. This means that also the initial matrix is non-singular. Combined with the fact that even the other three blocks of the matrix, whose rows are the images of the 28 points as in \eqref{eq:base}, are non-singular, it results that im$\varphi$ has dimension 28.
\end{example}

The knowledge of $\dim$ im$\varphi$ enables us to compute the size of the ghost subgroup $\mathcal{G}$.

\begin{theorem}
$\abs{\mathcal{G}}=p^{\binom{p+1}{2}+1}$.
\end{theorem}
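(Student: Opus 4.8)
The plan is to read off $\abs{\mathcal{G}}$ by a pure counting argument, since all the real work has already been done in the surjectivity subsection. First I would recall that $\mathcal{G}=\ker\varphi$ and that the isomorphism $p^{\PG(2,p)}/\ker\varphi\cong\operatorname{im}\varphi$, combined with the fact that we are dealing with finite (abelian $p$-)groups, gives by Lagrange's theorem
\begin{displaymath}
\abs{\mathcal{G}}=\frac{\abs{p^{\PG(2,p)}}}{\abs{\operatorname{im}\varphi}}.
\end{displaymath}
Next I would evaluate the two cardinalities on the right. For the numerator, specializing $q=p$ in the count of $p^{\PG(2,q)}$ yields $\abs{p^{\PG(2,p)}}=p^{p^2+p+1}$. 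For the denominator, the surjectivity of $\varphi$ just proved means $\operatorname{im}\varphi=\F_p^{p-1}[X,Y,Z]$, which is an $\F_p$-vector space of dimension $\binom{2+p-1}{2}=\binom{p+1}{2}$, hence has $p^{\binom{p+1}{2}}$ elements.

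The last step is to simplify the exponent. Writing $\binom{p+1}{2}=\tfrac{p(p+1)}{2}=\tfrac{p^2+p}{2}$, I would compute
\begin{displaymath}
p^2+p+1-\binom{p+1}{2}=\frac{2(p^2+p+1)-(p^2+p)}{2}=\frac{p^2+p+2}{2}=\binom{p+1}{2}+1,
\end{displaymath}
so that $\abs{\mathcal{G}}=p^{\binom{p+1}{2}+1}$, which is exactly the claim. (For $p=2$ one can alternatively check directly that $\operatorname{im}\varphi=\F_2^1[X,Y,Z]$ has dimension $3=\binom{3}{2}$, so the same arithmetic applies.)

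I do not expect any genuine obstacle here: the statement is a numerical corollary of the Vandermonde-block reduction carried out earlier, and the only things to be careful about are bookkeeping — applying $\dim\F_q^{q-1}[\mathbf{X}]=\binom{n+q-1}{n}$ with $n=2$ and $q=p$, evaluating $\abs{p^{\PG(2,q)}}=p^{q^2+q+1}$ at $q=p$, and correctly matching the exponents. If anything deserves a word, it is to note explicitly that $\varphi$ is a homomorphism of finite groups so that $\abs{\ker\varphi}\cdot\abs{\operatorname{im}\varphi}=\abs{p^{\PG(2,p)}}$, which is the single ingredient the whole computation rests on.
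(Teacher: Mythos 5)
Your proposal is correct and coincides with the paper's own argument: both deduce $\abs{\mathcal{G}}=\abs{p^{\PG(2,p)}}/\abs{\operatorname{im}\varphi}=p^{p^2+p+1}/p^{\binom{p+1}{2}}=p^{\binom{p+1}{2}+1}$ from the surjectivity of $\varphi$ established earlier. Your extra care in spelling out the first isomorphism theorem and the exponent arithmetic is fine but adds nothing beyond the paper's proof.
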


\begin{proof}
im$\varphi$ has $p^{\binom{p+1}{2}}$ elements. So, there are
\begin{displaymath}
\frac{p^{p^2+p+1}}{p^{\binom{p+1}{2}}}=p^{\binom{p+1}{2}+1}
\end{displaymath}
ghosts in $\PG(2,p)$.\qed
\end{proof}

\section{Conclusions}\label{sec:conclusions}
In this paper we have addressed the problem of classifying the subsets of $\PG(2,q)$ associated to a same power sum polynomial. By exploiting some similarities with the classical tomographic problem, we have shown that two sets with a same power sum polynomial differ, in the multiset sum sense, by a set whose power sum polynomial is the null one. In analogy to tomography, such a set has been called ghost. We have then counted the number of ghosts of $\PG(2,p)$ by arguing on the image of the function $\varphi$, which maps a (multi)set of $\PG(2,p)$ to the corresponding power sum polynomial.

The present paper may be considered as a starting point in the study of the connection between (multi)sets of points in $\PG(2,q)$ and their corresponding power sum polynomials within the framework of the polynomial technique for finite geometries. Therefore several future developments may be taken into account.

An immediate related research could be the study of the behavior of the power sum polynomials under the intersection of two sets.

Secondly, we aim to study the size of im$\varphi$ even when $h$ is greater than one. This would give the exact number of ghosts in every projective plane.

Moreover, the subgroup of ghosts deserves a deeper investigation. In particular, we would like to find, if possible, a set of generators for the ghosts.

Further, our investigation has dealt with the planar case, due to its connections with the tomographic problem. Another development is the extension to higher dimensions, to treat the problem in full generality.

\bibliographystyle{plain}
\bibliography{pspolynomials}

\end{document}